\newtheorem{thm}{Theorem}[section]
\newtheorem*{thm*}{Theorem}
\newtheorem{lem}[thm]{Lemma}
\newtheorem{cor}[thm]{Corollary}
\theoremstyle{definition}
\newtheorem{defn}[thm]{Definition}
\theoremstyle{remark}
\newtheorem{rem}[thm]{Remark}
\newtheorem{ques}[thm]{Question}
\numberwithin{equation}{section}
\DeclareMathOperator{\vol}{vol}
\renewcommand{\epsilon}{\varepsilon}
\renewcommand{\phi}{\varphi}
\renewcommand{\kappa}{\varkappa}
\begin{document}

\title{An analogue of Gromov's waist theorem for coloring the cube}
\author{R.N.~Karasev}
\thanks{Supported by the Dynasty Foundation, by ERC Advanced Research Grant No.~267195 (DISCONV), the President's of Russian Federation grant MK-113.2010.1, the Russian Foundation for Basic Research grants 10-01-00096 and 10-01-00139, the Federal Program ``Scientific and scientific-pedagogical staff of innovative Russia'' 2009--2013, and the Russian government project 11.G34.31.0053.}
\email{r\_n\_karasev@mail.ru}
\address{Roman Karasev, Dept. of Mathematics, Moscow Institute of Physics and Technology, Institutskiy per. 9, Dolgoprudny, Russia 141700}
\address{Roman Karasev, Laboratory of Discrete and Computational Geometry, Yaroslavl' State University, Sovetskaya st. 14, Yaroslavl', Russia 150000}

\subjclass[2010]{05C15,54F45}
\keywords{graph coloring, waist of the sphere}

\begin{abstract}
It is proved that if we partition a $d$-dimensional cube into $n^d$ small cubes and color the small cubes in $m+1$ colors then there exists a monochromatic connected component consisting of at least $f(d, m) n^{d-m}$ small cubes.
\end{abstract}

\maketitle

\section{Introduction}

One possible way to express that $Q^d = [0,1]^d$ has dimension $d$ is to claim that it cannot be colored in $d$ colors with arbitrarily small connected monochromatic components. In~\cite{matpri2008} the following related question was studied: 

\begin{ques}
\label{coloring-ques}
If we color $Q^d$ in $m+1$ colors (to make the problem discrete we color small cubes of the partition of $Q^d$ into $n^d$ small cubes) then what size of a monochromatic connected component can we guarantee?
\end{ques}

For $m=d-1$ the HEX lemma~\cite{gale1979} (or an appropriate discretization of the Sitnikov theorem~\cite{sit1958} on the Alexandrov waist, see also~\cite[Section~6]{kar2010cpt}) shows that there must exist a monochromatic connected component spanning two opposite facets, and therefore consisting of at least $n$ small cubes. In~\cite{matpri2008} the coloring in $2$ colors was studied using isoperimetric inequalities for the grid and a lower bound $n^{d-1} - d^2n^{d-2}$ for the size of a connected monochromatic component was established. It was conjectured in~\cite{matpri2008} that the size of a monochromatic connected component is of order $n^{d-m}$ for $m+1$ colors. 

Alexey Kanel-Belov also posed the same problem in 1990s (private communication) and it circulated among mathematicians in Moscow, see for example~\cite[Problem~15]{kb2010}. Marsel~Matdinov has independently obtained another solution for this problem that appeared in~\cite{matd2011}.

Here we give a nontrivial lower bound for the size of a monochromatic connected component:

\begin{thm}
\label{cube-color}
Suppose that a $d$-dimensional cube $Q^d$ is partitioned into $n^d$ small cubes in an obvious way. Let $0\le m < d$. If the set of small cubes of $Q^d$ is colored in $m+1$ colors then there exists a connected monochromatic component of size at least
$$
f(d, m) n^{d-m}.
$$
Here $f(d,m)$ is a function depending on $d$ and $m$ and \emph{not depending} on $n$.
\end{thm}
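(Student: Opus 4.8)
The plan is to \emph{topologize} the coloring, prove a Gromov--waist-style inequality for the cube (the ``analogue'' of the title), and then read off a large monochromatic component from a large fibre. I would argue by induction, say on $d$ (with, for each fixed $d$, a secondary induction on $m$). The degenerate cases are disposed of first: $m=0$ is trivial with $f(d,0)=1$, and $m=d-1$ is the HEX lemma quoted in the introduction (a monochromatic connected set joining two opposite facets, hence of size $\ge n=n^{d-m}$), so $f(d,d-1)=1$. Moreover if some colour is absent the coloring really uses $\le m$ colours, and the statement for $(d,m-1)$ gives a component of size $\ge f(d,m-1)n^{d-(m-1)}\ge f(d,m)n^{d-m}$ provided $f$ is chosen nonincreasing in $m$ and $n\ge1$; and for $n$ below a constant $n_0(d,m)$ the asserted bound is $<1$, hence vacuous. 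So in the main case we may assume $0<m<d-1$, all $m+1$ colours occur, and $n\ge n_0$.

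Next I would pass to a continuous map. Smooth the coloring into a piecewise-linear $F\colon Q^d\to\Delta^m$ onto the $m$-simplex whose vertices $v_0,\dots,v_m$ correspond to the colours: send the ``bulk'' of each colour-$i$ small cube to $v_i$, and interpolate only inside thin transition layers, of width $\ll1/n$, around the common faces of differently coloured small cubes. With a careful choice of these layers one arranges that the connected components of $F^{-1}(v_i)$ are exactly the monochromatic components of colour $i$ (up to thin neighbourhoods that change the number of small cubes met by at most a factor depending only on $d$). It then suffices to produce a vertex $v_i$ and a connected component $C$ of $F^{-1}(v_i)$ that meets at least $c(d,m)\,n^{d-m}$ small cubes.

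The heart of the argument is the waist inequality: there is $c_1=c_1(d,m)>0$ such that, for every continuous $G\colon Q^d\to\Delta^m$ arising from a coloring with all $m+1$ colours present (after, if necessary, an auxiliary induction on $d$ that handles colorings degenerating on $\partial Q^d$), some vertex preimage $G^{-1}(v_i)$ has a connected component whose ``$(d-m)$-dimensional size'' is at least $c_1$ --- in discretized form, one that cannot be covered by fewer than $c_1 n^{d-m}$ small cubes. For $m=d-1$ this is exactly HEX; the general case is its higher-codimension analogue. I would prove it in the spirit of Gromov's argument, by a min--max over ``pancake'' slicings: fibre $Q^d$ over a coordinate by level sets $\cong Q^{d-1}$, apply the lower-dimensional instance of the waist inequality on a suitably chosen slice to get a monochromatic piece there, and --- crucially --- choose the slicing function so that this piece persists through a parameter interval of definite length, so that the pieces over consecutive slices stack up to a single connected set whose size is larger by a factor $\sim n$, giving order $n^{d-m}$. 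Alternatively one could double $Q^d$ across $\partial Q^d$ to a sphere $S^d$, transport $G$, and invoke Gromov's waist of the sphere; in either route connectivity must be tracked throughout. Applying this to $F$ yields the vertex $v_i$ and component $C$ wanted above; retracting the thin transition layer lands $C$ inside the colour-$i$ region, so $C$ is (up to the $\mathrm{const}(d)$ factor) a monochromatic connected component meeting $\ge c(d,m)n^{d-m}$ small cubes, and one sets $f(d,m):=\min\{c(d,m),f(d,m-1)\}$, which closes the induction.

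The main obstacle is precisely this waist inequality, and within it the insistence on a \emph{connected} component rather than merely a large set. A single lower-dimensional fibre of the coloring carries no useful information on its own --- already an interval coloured with $m+1\ge2$ colours can have every monochromatic component of size $1$ --- so once $2m>d$ there is no fibre-by-fibre reduction, and one is forced into a genuinely global min--max that threads one connected component through many fibres simultaneously. Arranging the slicing function and the bookkeeping so that this argument produces an honest constant $c_1(d,m)>0$, and verifying the boundary non-degeneracy hypothesis for a map coming from an arbitrary coloring, are the crux.
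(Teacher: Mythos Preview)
Your proposal has a genuine gap at its core. You explicitly identify the ``main obstacle'' yourself: you need a waist-type inequality that produces a large \emph{connected} component of a fibre, not merely a large fibre. You then sketch a slicing-and-stacking induction and an alternative doubling-to-the-sphere route, but neither is actually carried out, and in fact the paper states right after Corollary~\ref{sph-waist-vol} that a connected-component version of the waist theorem is an \emph{open problem}. Your own observation that ``an interval coloured with $m+1\ge 2$ colours can have every monochromatic component of size $1$'' shows why the fibrewise induction does not get off the ground: there is no base case for the inner induction once $m\ge 1$, and the vague appeal to a ``slicing function'' chosen so that pieces ``persist through a parameter interval of definite length'' is precisely the missing idea, not a step one can fill in routinely. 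So as written, the argument assumes exactly the statement that is hard.

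The paper's proof avoids this issue entirely by a different mechanism. Rather than looking for one good fibre of a map to $\Delta^m$, it works with the nerve $N$ of the covering of $Q$ by the monochromatic components $C_i$ (which is $m$-dimensional because at most $m+1$ colours meet at a point), and with the intersection chains $C(i_0,\dots,i_k)=C_{i_0}\cap\dots\cap C_{i_k}$. A filling lemma for relative cycles in $(Q,\partial Q)$ (Lemma~\ref{filling}) lets one build, by descending induction on $k$, chains $F(i_0,\dots,i_k)$ satisfying $\partial F(i_0,\dots,i_k)=C(i_0,\dots,i_k)-\sum_{i_{k+1}}F(i_0,\dots,i_{k+1})$, with volume controlled via Lemma~\ref{bounded-vol} by the assumed upper bound $\alpha n^{-m}$ on $\|C_i\|$. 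Summing, one writes $Q=\sum_i X_i$ with each $X_i$ a $d$-cycle of volume $\le \bar f(d)\alpha$; if $\alpha$ is small enough each $X_i$ is a boundary, contradicting $[Q]\neq 0$ in $H_d(Q,\partial Q)$. Connectedness of the $C_i$ is used only to ensure the skeleton volumes of each single $C_i$ scale correctly (Lemma~\ref{bounded-vol}); no connected-fibre waist theorem is invoked anywhere.
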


\begin{rem}
In this theorem two small cubes are connected if they have a nonempty intersection as closed sets.
\end{rem}

\begin{rem}
Marsel~Matdinov~\cite{matd2011} has noted that the number of colors can be arbitrary, the only thing we have to check is that no point is colored in more that $m+1$ colors. Indeed, the proof in Section~\ref{proof-sec} of this paper uses the multiplicity of coloring and does not use the total number of colors.
\end{rem}

\begin{rem}
From (\ref{asymptotic-f}) we see that for large enough $n$ we can take $\left( (m+1)! \binom{d}{m} 4^m\right)^{-1}$ as the coefficient in this theorem. The coefficient $(m+1)!$ is an inevitable consequence of the applied technique, as it was in~\cite{grom2010,kar2011}, while the part $\binom{d}{m} 4^m$ still may be improved. It would be natural to have an estimate not depending on $d$, as it is for $m=1$ and $m=d-1$.
\end{rem}

\begin{rem}
From Lemma~\ref{bounded-vol} below it will be clear that the cubic partition may not be the optimal one in this problem. In the proof of Theorem~\ref{cube-color} the partition has to be put into a general position thus spoiling the constant $f(d, m)$. A \emph{simple} (which is \emph{dual to simplicial}) partition of $Q^d$ into small parts of bounded complexity may be more suitable. We could also start (following~\cite{matpri2008}) from a partition Poincar\'e dual (which means consisting of stars of vertices in the barycentric subdivision) to a triangulation of $Q^d$ obtained by triangulating every small cube of the standard cubic partition; such a partition is already simple.
\end{rem}

The result of this paper and the method of its proof has very much in common with the results in~\cite{grom2003,mem2009,grom2010} on ``waists''. Let us remind two of them:

\begin{thm}[Gromov--Memarian, 2003--2009]
\label{sph-waist-strong}
Suppose $f : S^d \to \mathbb R^m$ is a continuous map. There exists a point $z\in \mathbb R^m$ such that for any $\varepsilon > 0$
$$
\vol U_\varepsilon(f^{-1}(z)) \ge \vol U_\varepsilon S^{d-m}.
$$
Here $U_\varepsilon$ denotes the $\varepsilon$-neighborhood, $S^d$ is the standard unit sphere, and $S^{d-m}$ is its $(d-m)$-dimensional equatorial subsphere, i.e. $S^{d-m} = S^d\cap \mathbb R^{d-m+1}$.
\end{thm}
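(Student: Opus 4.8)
The plan is to carry out Gromov's convex‑partition argument for the sphere \cite{grom2003}, with the analytic core organised as by Memarian \cite{mem2009}.

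First I would make two reductions. The function $z \mapsto \vol U_\varepsilon(f^{-1}(z))$ is upper semicontinuous: if $z' \to z$ then $f^{-1}(z')$ eventually lies in any prescribed open $V \supseteq f^{-1}(z)$, since $f(S^d \setminus V)$ is compact and misses $z$; hence $\limsup_{z'\to z} \vol U_\varepsilon(f^{-1}(z')) \le \vol U_\varepsilon(f^{-1}(z))$. Consequently, if $z_k$ is good for $f_k$ for all $\varepsilon$ and $f_k \to f$ uniformly with $z_k \to z$, then $z$ is good for $f$ for all $\varepsilon$; this lets me assume $f$ is smooth and generic, so that $f^{-1}(z)$ is a smooth $(d-m)$‑dimensional submanifold for almost every $z$, and it also lets me obtain the final point $z$ as a subsequential limit of the points $z_N$ produced below. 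The same upper semicontinuity shows it suffices to establish, for each fixed $\varepsilon$, a bound $\vol U_\varepsilon(f^{-1}(z_N)) \ge \vol U_\varepsilon S^{d-m} - \eta_N$ with $\eta_N \to 0$.

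The combinatorial heart, and the step I expect to be the main obstacle, is a convex‑partition lemma: for every $N$ one decomposes $S^d = \Omega_1 \cup \dots \cup \Omega_N$ into \emph{spherically convex} cells (each the intersection of $S^d$ with a convex cone with apex at the centre) of equal volume $\vol(S^d)/N$, produced by repeatedly bisecting the current cells along great hyperspheres into two pieces of equal measure, in such a way that (i) the cells become ``$(d-m)$‑thin'', i.e.\ each $\Omega_i$ lies in a $\delta_N$‑neighbourhood of a $(d-m)$‑dimensional totally geodesic subsphere with $\delta_N \to 0$, while (ii) there is a single point $z = z_N \in \mathbb{R}^m$, a regular value of $f$, lying in the interior of $f(\Omega_i)$ for every $i$. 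Property (ii) is maintained during the bisection by always choosing the bisecting hypersphere so that the current candidate value stays in $f$ of both new cells; the existence of such a hypersphere, simultaneously equipartitioning the measure, is an intermediate‑value / Borsuk--Ulam statement, while (i) is arranged by, in addition, always cutting across a direction in which $f$ varies. Verifying (i) and (ii) together, making a consistent choice of great hypersphere at each step, and controlling the accumulation of the shape errors as $N$ grows is the delicate part, and it is exactly the portion that \cite{mem2009} makes rigorous.

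Finally I would assemble the estimate. For fixed large $N$ the cells are disjoint and each meets $f^{-1}(z_N)$, so
$$
\vol U_\varepsilon\bigl(f^{-1}(z_N)\bigr) \ge \sum_{i=1}^{N} \vol\Bigl(U_\varepsilon\bigl(f^{-1}(z_N)\cap\Omega_i\bigr)\cap\Omega_i\Bigr).
$$
I would bound the $i$‑th term below by the L\'evy--Gromov spherical isoperimetric inequality applied inside the spherically convex cell $\Omega_i$ --- equivalently, by the localisation technique, decomposing $\Omega_i$ into one‑dimensional needles transverse to its thin directions and applying the sharp one‑dimensional comparison on each needle: among subsets of $\Omega_i$ that separate it the way the fibre $f^{-1}(z_N)\cap\Omega_i$ does, the one whose relative $\varepsilon$‑neighbourhood has smallest volume is modelled on a codimension‑$m$ tube around an equatorial $(d-m)$‑sphere. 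Because the cells are $(d-m)$‑thin and exhaust $S^d$, the sum of these per‑cell bounds converges, as $N \to \infty$, to $\vol U_\varepsilon S^{d-m}$; choosing $N$ so that the defect is below $\eta$ completes the fixed‑$\varepsilon$ target, and the reductions of the first step then yield the theorem. The subtle point here is that the constant must come out exactly right: one needs the sharp form of the isoperimetric comparison on each cell, and one needs the $(d-m)$‑thin cells to fit together into the model tube without any loss.
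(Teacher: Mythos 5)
The paper does not prove Theorem~\ref{sph-waist-strong}: it is quoted as a known result of Gromov~\cite{grom2003} and Memarian~\cite{mem2009}, and the paper uses it only as motivation (Corollary~\ref{sph-waist-vol} and the subsequent discussion). So there is no in-paper proof to compare against; what you have written must stand on its own as an outline of the Gromov--Memarian argument.

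As an outline it is in the right spirit but has two genuine gaps beyond merely deferring details. First, the way you obtain the common value $z_N$ is not how the argument actually goes. You describe fixing a candidate $z$ and then, at each bisection, choosing the great hypersphere so that $z$ stays in the image of both halves. But a priori no single $z$ survives $N$ rounds of equipartitioning while the cells shrink; what Gromov actually does is \emph{not} maintain a fixed $z$ through the construction but instead run a topological (Borsuk--Ulam/degree) argument over the whole configuration space of binary convex partitions, producing the point $z$ and the partition \emph{simultaneously} as a fixed-point-type output. The ``intermediate value'' statement you invoke is too weak to produce one $z$ hitting all $N$ cells, and this is not a detail one can patch locally --- it changes the structure of the proof.

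Second, the assembly step as you state it does not obviously close. You lower-bound $\vol\bigl(U_\varepsilon(f^{-1}(z_N)\cap\Omega_i)\cap\Omega_i\bigr)$ by an isoperimetric comparison inside $\Omega_i$ and then assert that the sum of these bounds converges to $\vol U_\varepsilon S^{d-m}$ as the cells become $(d-m)$-thin. The L\'evy--Gromov inequality on a spherically convex cell controls neighbourhoods of \emph{separating} hypersurfaces, i.e.\ codimension~$1$ objects relative to the cell; what you need is a sharp bound for a codimension-$m$ fibre piece, and the comparison model is not a cap but a tube around a $(d-m)$-equator. Gromov's route to this is a bespoke ``pancake'' lemma --- localisation of the thin cell into one-dimensional needles transverse to the thin directions, the sharp one-dimensional comparison on each needle, and then a careful reassembly showing that the per-cell tube contributions add up with no loss to exactly $\vol U_\varepsilon S^{d-m}$. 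Invoking ``the L\'evy--Gromov inequality applied inside $\Omega_i$'' by name is not the same statement and, taken literally, gives the wrong-dimensional model and the wrong constant. You flag this yourself as ``the delicate part,'' but as written the chain of inequalities does not yet produce the sharp bound, and this is the part where the theorem actually lives.

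Your first reduction (upper semicontinuity of $z \mapsto \vol U_\varepsilon(f^{-1}(z))$ and passage to smooth generic $f$, plus extraction of the final $z$ as a subsequential limit of the $z_N$) is fine in substance, modulo routine care with $\varepsilon$ versus $\varepsilon + \delta$ when comparing fibres of nearby maps.
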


\begin{cor}
\label{sph-waist-vol}
In particular, for generic smooth maps $f: S^d\to\mathbb R^d$ it follows that for some point $z\in\mathbb R^d$ the preimage $f^{-1}(z)$ has $(d-m)$-dimensional volume $\vol_{d-m} f^{-1}(z) \ge \vol_{d-m} S^{d-m}$. 
\end{cor}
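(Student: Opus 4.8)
The plan is to deduce Corollary~\ref{sph-waist-vol} from Theorem~\ref{sph-waist-strong} by passing to the limit $\varepsilon\to 0$ in the neighborhood inequality. Fix a generic smooth $f\colon S^d\to\mathbb R^m$; all neighborhoods and volumes below refer to the round metric on $S^d$, and $\omega_m$ denotes the volume of the unit $m$-ball. First I would invoke Theorem~\ref{sph-waist-strong} to get a point $z$ with $\vol U_\varepsilon(f^{-1}(z))\ge\vol U_\varepsilon S^{d-m}$ for every $\varepsilon>0$; the task is then to turn this into an inequality between $(d-m)$-dimensional volumes.

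Next I would record the two tube asymptotics as $\varepsilon\to 0$. For the equatorial subsphere $S^{d-m}\subset S^d$, a direct computation of the tube in the round metric (or Weyl's tube formula, since $S^{d-m}$ is a closed totally geodesic submanifold) gives $\vol U_\varepsilon S^{d-m}=\omega_m\varepsilon^m\vol_{d-m}S^{d-m}\bigl(1+O(\varepsilon^2)\bigr)$. For the fiber, genericity of $f$ enters: $f^{-1}(z)$ is compact and, away from a subset of dimension $<d-m$, a smooth $(d-m)$-dimensional submanifold of $S^d$ --- immediate from Sard's theorem when $z$ is a regular value, and in general from the Thom--Boardman stratification of the critical locus. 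Hence $f^{-1}(z)$ is $(d-m)$-rectifiable with finite $(d-m)$-dimensional Hausdorff measure, so its Minkowski content and Hausdorff measure coincide and $\vol U_\varepsilon(f^{-1}(z))=\omega_m\varepsilon^m\vol_{d-m}f^{-1}(z)\bigl(1+o(1)\bigr)$. Dividing the inequality of the first paragraph by $\omega_m\varepsilon^m$ and letting $\varepsilon\to 0$ then gives $\vol_{d-m}f^{-1}(z)\ge\vol_{d-m}S^{d-m}$.

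The step I expect to be the main obstacle is controlling the fiber $f^{-1}(z)$ when $z$ is a critical value of $f$. One cannot simply replace $z$ by a nearby regular value, because $z'\mapsto\vol U_\varepsilon(f^{-1}(z'))$ is only upper semicontinuous (fibers vary upper semicontinuously, and the volume is then controlled only through reverse Fatou), so a perturbation can only decrease it. The honest way out is the tameness of \emph{all} fibers of a generic map --- each is Whitney stratified of the expected dimension, which is exactly what legitimizes the equality of Minkowski content and Hausdorff measure used above; alternatively, one verifies that the construction behind Theorem~\ref{sph-waist-strong} can be arranged to return a waist point that is a regular value of the given generic $f$, in which case $f^{-1}(z)$ is a closed submanifold and the limiting argument is entirely routine.
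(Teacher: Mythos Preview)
The paper gives no proof of Corollary~\ref{sph-waist-vol}: it is stated as an ``in particular'' consequence of Theorem~\ref{sph-waist-strong} and left at that. Your argument is precisely the standard way to make the implication rigorous---divide the neighborhood inequality by $\omega_m\varepsilon^m$ and identify the limits via the tube formula on one side and the Minkowski-content/Hausdorff-measure identification on the other---so in spirit you are simply supplying what the paper suppresses.

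Two small comments. First, the paper's statement has a typo (the target should be $\mathbb R^m$, not $\mathbb R^d$); you silently corrected this, which is right. Second, your discussion of the critical-value case is the only place with real content, and you are honest that it is the crux. Both of your proposed resolutions are viable: for a \emph{generic} smooth $f$ (in the Thom--Boardman sense) every fiber is indeed a Whitney-stratified set of dimension $\le d-m$, hence $(d-m)$-rectifiable with locally finite measure, which is what you need for the Minkowski/Hausdorff identification. The alternative---arranging $z$ to be a regular value---would require going into the Gromov--Memarian proof, which the paper does not do either. So your first route is the self-contained one and matches the level at which the paper operates.
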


Since $Q^d$ can be embedded into $S^d$ preserving the volumes up to some constant, Theorem~\ref{cube-color} could follow from Corollary~\ref{sph-waist-vol} directly if there were some lower bounds for maximal volume of a \emph{connected component} of $f^{-1}(z)$. An open problem is to establish such a result.

\medskip
\textbf{Acknowledgments. }
The author thanks Ji\v{r}\'{\i}~Matou\v{s}ek and Imre~B\'{a}r\'{a}ny for intensive discussions and numerous useful remarks.

\section{Filling of cycles in $(Q, \partial Q)$}

Fix the dimension $d$ and denote $Q^d$ by $Q$ for brevity. Generally, we are going to follow the approach of~\cite{grom2010} to examine the \emph{filling profile} for cycles in $(Q,\partial Q)$. Let us work with singular $k$-dimensional chains $C_k(Q)$ of the space $Q$, represented by piece-wise linear (or shortly PL) images of $k$-dimensional simplicial complexes. Assume that $Q$ is the unit cube in the Euclidean space.

\begin{defn}
Denote the $k$-dimensional Euclidean volume of $c\in C_k(Q)$ by 
$$
\|c\| = \sum_{\sigma} |\alpha_\sigma| \vol_k \sigma,
$$
if $c=\sum_\sigma \alpha_\sigma \sigma$ is the linear combination of simplices. 
\end{defn}

There arises the corresponding boundary operator $\partial : C_k(Q)\to C_{k-1}(Q)$, the relative cycle group
$$
Z_k(Q, \partial Q) = \{c\in C_k(Q) : \partial c\in C_{k-1}(\partial Q)\},
$$
the boundary group
$$
B_k(Q) = \partial C_{k+1}(Q) \subset Z_k(Q, \partial Q),
$$
and the homology group $H_k(Q,\partial Q) = Z_k(Q,\partial Q) / B_k(Q)$.

The well known fact is that $H_k(Q,\partial Q) = 0$ for $k\neq d$ and $H_d(Q,\partial Q)=\mathbb Z$.

\begin{rem}
In the rest of the proof we use integral homology and have to be careful about signs. Though everything passes literally with modulo $2$ homology without worrying about signs; the integral homology is just left for other possible applications.
\end{rem}

So the operator $\partial$ is invertible on $Z_k(Q,\partial Q)$ for $k<d$ and the following lemma allows to invert it economically:

\begin{lem}
\label{filling}
For any $z\in Z_k(Q, \partial Q)$ with $0\le k < d$ one can find $H(z)$ such that $\partial H(z) = z\pmod {\partial Q}$ and 
$$
\|H(z)\| \le (k+1) \|z\|.
$$
\end{lem}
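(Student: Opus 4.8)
The plan is to prove a uniform filling inequality for relative cycles in the cube by induction on the dimension $k$, using a cone (or rather a ``coordinate-cone'') construction that realizes the standard contracting homotopy of $(Q,\partial Q)$ while keeping control of volumes. Since $Q = [0,1]^d$, the most economical homotopies are the straight-line contractions onto the facet $\{x_d = 0\}$, which contract $Q$ onto a $(d-1)$-cube; iterating these coordinate-wise realizes the fact that $H_k(Q,\partial Q)=0$ for $k<d$ through explicit chain-level primitives.

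Concretely, I would first set up the slicing map. Write $Q = I \times Q'$ with $I = [0,1]$ and $Q' = Q^{d-1}$, and for a chain $c$ in $Q$ let $c_t$ denote (a suitable PL model of) its intersection with the slice $\{x_d = t\}$, pushed into $Q'$; by the coarea/Fubini inequality one has $\int_0^1 \|c_t\|\, dt \le \|c\|$ after putting $c$ in general position with respect to the coordinate $x_d$. Then, given $z \in Z_k(Q,\partial Q)$, build $H(z)$ by fibering over $t$: for each $t$ one needs a filling of the slice $z_t$ inside $(Q',\partial Q')$, obtained from the inductive hypothesis in dimension $d-1$ (for $k-1 < d-1$), and then one assembles these fillings into a $(k{+}1)$-chain in $Q$, adding the obvious ``product with $I$'' part coming from the portion of $z$ that is transverse to the slices. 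The base case $k=0$ is a direct computation: a relative $0$-cycle is filled by straight segments to the nearest facet, with total length at most $\|z\|$, giving the constant $k+1 = 1$.

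The bookkeeping that produces exactly the factor $(k+1)$ is the delicate part: one must check that the two contributions to $H(z)$ — the ``horizontal'' fillings of the slices (governed, inductively, by a factor $\le k$ in dimension $d-1$, since the slice cycles are $(k-1)$-dimensional) and the ``vertical'' product contribution (contributing a factor $\le 1$ from integrating over $I$) — combine additively to give $\|H(z)\| \le k\|z\| + \|z\| = (k+1)\|z\|$, rather than multiplicatively. This requires choosing the general-position perturbations and the PL structure so that the slice decomposition is honest at the chain level ($\partial$ of the assembled chain equals $z$ modulo $\partial Q$, with the right signs), and so that the volume of the assembled $(k{+}1)$-chain is genuinely bounded by the integral of the slice data plus the transverse data, with no hidden multiplicative blow-up.

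The main obstacle I expect is precisely this chain-level compatibility: ensuring that the slices $z_t$ are themselves relative cycles in $(Q',\partial Q')$ for almost every $t$ (so that the inductive hypothesis applies), that the family $H(z_t)$ can be chosen to depend measurably/piecewise-linearly on $t$ so the union is a legitimate PL chain, and that the boundary terms created at $t=0$ and $t=1$ land in $\partial Q$. Making all of this rigorous in the PL category — as opposed to the smooth or measure-theoretic one where coarea is automatic — is where the care is needed; the volume estimate itself, once the construction is set up, is then just the coarea inequality plus the inductive bound plus a one-dimensional integration.
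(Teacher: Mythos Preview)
Your ingredients---slicing by a coordinate hyperplane, the coarea inequality $\int_0^1\|z_t\|\,dt\le\|z\|$, and induction on $k$---are exactly those of the paper. The divergence is in how the integral is used. You propose to fill \emph{every} slice $z_t$ by the inductive hypothesis and then assemble the one-parameter family into a $(k{+}1)$-chain. The paper does not attempt this: it uses the integral inequality only to \emph{locate a single} $t$ with $\|z_t\|\le\|z\|$. At that one level it cuts $z=z_0+z_1$, cones $z_0$ straight onto the facet $\{x_1=0\}$ and $z_1$ onto $\{x_1=1\}$ (these canonical projections $I_0,I_1$ satisfy $\|I_0(z_0)\|+\|I_1(z_1)\|\le\|z\|$), and then cancels the leftover boundary along the cut by subtracting $H(z_t)\times[0,1]$, where the inductive $H$ is applied once to the single $(k{-}1)$-cycle $z_t$ sitting in the $(d{-}1)$-cube $\{x_1=t\}$. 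Since $\|H(z_t)\times[0,1]\|\le k\|z_t\|\le k\|z\|$, the total is at most $(k{+}1)\|z\|$.

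The obstacle you yourself flag---arranging that $t\mapsto H(z_t)$ be piecewise-linear so that the swept family is a legitimate chain---is not a technicality to be cleaned up later but the actual gap. The operator $H$ is built by choosing, at each inductive stage, a good slicing level, and that choice jumps discontinuously as the input cycle varies; so the family $H(z_t)$ will generically have jumps, and each jump contributes an extra boundary piece lying neither in $z$ nor in $\partial Q$. (Already for $d=2$, $k=1$: slice a segment $z$ by $x_1=t$; the nearest-endpoint filling of the point $z_t\in[0,1]$ flips sides when $z_t$ crosses $\tfrac12$, and the swept $2$-chain acquires a spurious boundary segment there.) Forcing continuity by always coning to a fixed facet destroys the inductive constant. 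The single-slice trick is precisely what removes this issue: the cones $I_0$, $I_1$ involve no choices, and the induction is invoked on one fixed cycle rather than on a moving family.
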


\begin{rem}
The value $H(z)$ may not depend on $z$ linearly, but the essential thing is that we have a linear bound on the volume.
\end{rem}

\begin{proof}
Consider all possible intersections $z_t=z\cap \{x_1=t\}$ with a hyperplane orthogonal to the $0x_1$ axis. We have the following inequality:
\begin{equation}
\label{integration}
\int_0^1 \|z_t\| dt \le \|z\|.
\end{equation}
It is sufficient to check it for every simplex $\sigma$ of $z$, when it is easy to check.

From (\ref{integration}) it follows that one of the sections $z_t$ must have $(k-1)$-dimensional volume at most $\|z\|$, and for generic $t$ it must be a $(k-1)$-cycle. Now we cut the cycle $z$ into two chains $z = z_0 + z_1$, where $z_0$ corresponds to $x_1 \le t$ and $z_1$ corresponds to $x_1 \ge t$. The cutting is applicable because after cutting every simplex of $z$ we may triangulate the cut parts to make it again a simplicial chain. It is clear that
$$
\partial z_0 = z_t \pmod{\partial Q}\quad\text{and}\quad  \partial z_1 = -z_t \pmod{\partial Q}.
$$

Let $F_0$ and $F_1$ be the facets of $Q$ with $x_1=0$ and $x_1=1$ respectively. If a chain $y\in C_\ell(Q, \partial Q)$ is given as a PL image of a simplicial complex $y : K\to Q$ then we construct a PL map $I_0(y) : K\times [0, 1] \to Q$ by sending $p\times 1$ to $y(p)$, $p\times 0$ to the projection of $y(p)$ onto $F_0$, and $p\times s$ to the corresponding combination of $I_0(y)(p\times 0)$ and $I_0(y)(p\times 1)$. If $y$ does not touch $F_1$ then we have the following properties:
$$
\partial I_0(y) = y + I_0(\partial y)\pmod{\partial Q}\quad\text{and}\quad \|I_0(y)\| \le \|y\|.
$$
Similarly we define $I_1(y)$ with the projection onto $F_1$ for those $y$ that do not touch $F_0$ with the properties:
$$
\partial I_1(y) = y - I_1(\partial y)\pmod{\partial Q}\quad\text{and}\quad \|I_1(y)\| \le \|y\|.
$$
Now return to our cycle $z=z_0+z_1$ and put
$$
H(z) = I_0(z_1) + I_1(z_2) - H(z_t)\times [0, 1].
$$
The cycle $z_t$ is considered to be in the cube $Q' = Q\cap \{x_1=t\}$ of lower dimension; it has dimension $k-1$ and $H$ is defined for it by the inductive assumption. The multiplication by the segment $[0,1]$ produces $(k+1)$ dimensional chains in $Q$ from chains in $Q'$ in an obvious way.
 
For the boundary we have (modulo $\partial Q$):
$$
\partial H(z) = \partial I_0(z_1) + \partial I_1(z_2) - z_t\times [0,1] = z_0 + I_0(z_t) + z_1 - I_1(z_t) - z_t\times [0,1] = z,
$$
because $I_0(z_t) - I_1(z_t)$ obviously equals $z_t\times [0,1]$. For the volumes we also use the inductive assumption for $H(z_t)$ along with the choice of $t$ to show that:
$$
\|H(z)\| \le \|z_0\| + \|z_1\| + \|H(z_t)\| = \|z\| + \|H(z_t)\| \le \|z\| + k\|z_t\| \le (k+1)\|z\|.
$$
\end{proof}

\begin{defn}
Call a chain $c\in C_k(Q)$ \emph{rectilinear} if all its simplexes are parallel to coordinate $k$-subspaces of $\mathbb R^d$.
\end{defn}

For this type of chains we improve the filling profile:

\begin{lem}
\label{filling-rect}
For any rectilinear $z\in Z_k(Q, \partial Q)$ with $0\le k < d$ one can find a rectilinear $H(z)$ such that $\partial H(z) = z\pmod {\partial Q}$ and 
$$
\|H(z)\| \le \|z\|.
$$
\end{lem}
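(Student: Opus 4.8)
The plan is to imitate the proof of Lemma~\ref{filling} \emph{verbatim}, changing only two things: I will use the rectilinear structure to show that the ``cone'' terms $I_0,I_1$ are almost free, and I will choose the cutting hyperplane by an averaging argument that controls the two contributions to $\|H(z)\|$ at once. I would prove the statement for all $d$ and all $0\le k<d$ simultaneously by induction on $k$. The base case $k=0$ is direct: a rectilinear $0$-chain $z=\sum_p\alpha_p[p]$ is automatically a relative cycle, and taking $H([p])$ to be a coordinate segment from $p$ to the nearest facet (length $\le 1/2$, oriented so that $\partial H([p])=[p]\pmod{\partial Q}$) gives a rectilinear filling with $\|H(z)\|\le\tfrac12\|z\|$.

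For the inductive step, fix $1\le k<d$, assume the lemma for $k-1$, and given a rectilinear relative cycle $z$ decompose $z=z^{(1)}+w$, where $z^{(1)}$ is the sum of the simplices parallel to coordinate subspaces \emph{containing} $e_1$ and $w$ is the sum of the rest; the simplices of $w$ lie in finitely many hyperplanes $\{x_1=c_j\}$, and I write $w=\sum_j w_{c_j}$ for the corresponding pieces. As in Lemma~\ref{filling} one slices along $\{x_1=t\}$; here, since a rectilinear simplex through the $e_1$ direction has unit $x_1$-gradient along it, the inequality there becomes the equality $\int_0^1\|z_t\|\,dt=\|z^{(1)}\|$, and for generic $t$ (in particular $t\neq c_j$) the section $z_t=z\cap\{x_1=t\}$ equals $z^{(1)}\cap\{x_1=t\}$ and is a rectilinear relative $(k-1)$-cycle in $Q'=Q\cap\{x_1=t\}\cong[0,1]^{d-1}$. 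I would then cut $z=a+b$ along $\{x_1=t\}$, with $a$ supported where $x_1\le t$, and, exactly as in Lemma~\ref{filling}, set $H(z)=I_0(a)+I_1(b)-H(z_t)\times[0,1]$, where $H(z_t)$ is the rectilinear filling of $z_t$ in $Q'$ furnished by the induction hypothesis. The identity $\partial H(z)=z\pmod{\partial Q}$ and the fact that $H(z)$ is again rectilinear (each $I_j(\cdot)$ is a prism in the $e_1$ direction over a rectilinear chain, and $H(z_t)\times[0,1]$ is a product of a rectilinear chain with a coordinate segment) are inherited from that proof.

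The genuinely new point is the volume estimate. A prism $I_0$ over a simplex of $z^{(1)}$ is a degenerate $(k+1)$-chain — its direction $e_1$ is already tangent to the simplex — and so contributes nothing, while the prism over a simplex of $w$ at height $c_j<t$ has $(k+1)$-volume equal to $c_j$ times its $k$-volume (and symmetrically for $I_1$ at heights $c_j>t$). Hence $\|I_0(a)\|+\|I_1(b)\|=D(t):=\sum_{c_j<t}c_j\|w_{c_j}\|+\sum_{c_j>t}(1-c_j)\|w_{c_j}\|$, and integrating one gets $\int_0^1 D(t)\,dt=\sum_j 2c_j(1-c_j)\|w_{c_j}\|\le\tfrac12\|w\|$. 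Therefore
$$
\int_0^1\bigl(D(t)+\|z_t\|\bigr)\,dt\le\tfrac12\|w\|+\|z^{(1)}\|=\|z\|-\tfrac12\|w\|\le\|z\|,
$$
so I may pick a generic $t$ with $D(t)+\|z_t\|\le\|z\|$, and then $\|H(z)\|\le D(t)+\|H(z_t)\|\le D(t)+\|z_t\|\le\|z\|$ using the inductive bound $\|H(z_t)\|\le\|z_t\|$.

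I expect the main obstacle to be turning the two ``obvious'' remarks into rigorous statements: that a prism over a rectilinear simplex taken in a direction already tangent to it really has zero $(k+1)$-volume (a careful reading of the definitions of $I_0,I_1$), and that the cutting cost $D(t)$ and the recursion cost $\|z_t\|$ can be made small together — which is precisely the averaged inequality above, whose point is that slicing along $x_1$ is ``free'' on the part $z^{(1)}$ of the cycle that is already spread out in the $x_1$ direction. The remaining homological bookkeeping and the well-foundedness of the induction (slicing lowers both the chain dimension and the cube dimension by one, and $k=0$ is handled directly) are routine.
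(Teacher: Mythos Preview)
Your proof is correct and follows the same construction as the paper's: the same splitting into simplices parallel and perpendicular to $e_1$, the same filling $H(z)=I_0(a)+I_1(b)-H(z_t)\times[0,1]$, and the same key observation that the prisms $I_0,I_1$ over the parallel part are degenerate. The paper's bookkeeping is a bit simpler than yours: since every height $c_j$ and $1-c_j$ is at most $1$, one has $D(t)\le\|w\|=\|z\|_\perp$ for \emph{every} $t$, so there is no need to average $D(t)$ and $\|z_t\|$ jointly---it suffices to pick $t$ with $\|z_t\|\le\|z\|_\parallel=\|z^{(1)}\|$ and add the two estimates.
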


\begin{proof}
Proceed as in the previous proof. Any simplex of $z$ is either orthogonal to $0x_1$, or is parallel to $0x_1$. Thus the volume splits into corresponding parts:
$$
\|z\| = \|z\|_\perp + \|z\|_\parallel.
$$
When selecting the section $z_t$ we can guarantee that $\|z_t\| \le \|z\|_\parallel$, because in the rectilinear case we have the equality 
$$
\int_0^1 \|z_t\| dt = \|z\|_\parallel.
$$ 
Then
$$
\|H(z)\| \le \|I_0(z_1)\| + \|I_1(z_2)\| + \|H(z_t)\times [0, 1]\| \le \|z\|_\perp + \|z\|_\parallel = \|z\|.
$$
Here we use the fact that $\|I_0(z_1)\| \le \|z_1\|_\perp$ and $\|I_1(z_2)\|\le \|z_2\|_\perp$, along with the inductive assumption.
\end{proof}

\section{Proof of Theorem~\ref{cube-color}}
\label{proof-sec}

Denote by $C_1, \ldots, C_M$ the monochromatic connected components of the coloring. By perturbing slightly the walls of the partition of $Q$ into small cubes we may assume that every intersection $C_{i_0}\cap \dots\cap C_{i_k}$ has dimension at most $d-k$. In this partition of $Q$ into parts some sets $C_i$ may becomes disconnected but this makes the statement even stronger. 

Will will give three possible options for perturbations, because clear understanding of the possibilities may help in finding reasonable bounds for the constant $f(d,m)$ in this theorem:

\begin{itemize}
\item 
The coloring corresponds to a PL map $\chi : Q\to \Delta$, where $\Delta$ is an $m$-dimensional simplex with vertices $w_0,\ldots, w_m$. The subset of $Q$ of color $i$ corresponds to the preimage of the star of $v_i$ in the barycentric subdivision $\Delta'$. If we perturb the map $\chi$ so that it becomes transversal to the triangulation $\Delta'$ then we obtain the required properties of intersections $C_{i_0}\cap \dots\cap C_{i_k}$ because they correspond to preimages of codimension $k$ faces of $\Delta'$. 

\item 
We can also view the partition of $Q$ into smaller cubes as projection of facets of the graph of a convex function $\phi : Q \to \mathbb R$ onto $Q$. This function can be built as follows: take the convex PL function of one variable $\phi_1(x)$ with discontinuity of the derivative exactly at $x=1/n, 2/n,\ldots, (n-1)/n$. Then $\phi(x_1,\ldots, x_d) = \sum_{i=1}^d \phi_1(x_i)$ is exactly what is needed. Then we perturb the graph of $\phi$ to put its facets into a general positions (thus making this graph \emph{simple}) and obtain the PL function $\psi(x)$. The projections of facets of the graph of $\psi$ give a partition of $Q$ arbitrarily close to the cubic partition. The advantages of this method are convexity of parts and reasonable behaviour of faces of the partition in any dimension.

\item 
The third way to make the partition simple is shifting of the partition cubes.\footnote{This method was proposed by Imre~B\'{a}r\'{a}ny and helps to provide reasonable explicit bounds in (\ref{h-d1n} and (\ref{asymptotic-f}).} Suppose in every layer of $n^{d-1}$ cubes we have shifted the cubes so that the corresponding $(d-1)$-dimensional pattern is simple. Then we shift layers relative to each other and the general position will be simple again. Under such a shifting some cubes in the partition will be cut out and we will have to introduce some new (incomplete) partition cubes. This does not affect the proof because we may assume that the volumes of all the faces of the partition changed by arbitrarily small value. Important thing is that all intersections in this partition remain rectilinear.

\item
Another way of making the partition simple \footnote{Also proposed by Imre~B\'{a}r\'{a}ny} is to note that the cubical partition is the Voronoi partition corresponding to the integer grid. Now we can deform the Euclidean norm (keeping the points) so that the corresponding partition becomes simple.
\end{itemize}

Now we consider the nerve $N$ of the covering of $Q$ by $C_i$'s. Since all the simplices of $N$ are heterochromatic using $m+1$ colors in total it follows that $N$ is $m$-dimensional. After the perturbation of the partition and making it simple, for any simplex $(i_0, \ldots, i_m)\in N$ of maximal dimension the corresponding intersection 
$$
C(i_0,\ldots, i_m) = C_{i_0}\cap \dots \cap C_{i_m}
$$
is at most $(d-m)$-dimensional and actually represents a $(d-m)$-dimensional cycle. For a simplex $(i_0, \ldots, i_k)\in N$ of dimension $k < m$ the intersection 
$$
C(i_0,\ldots, i_k) = C_{i_0}\cap \dots \cap C_{i_k}
$$
is $(d-k)$-dimensional, but it may not be a cycle. Its boundary is given by
\begin{equation}
\label{boundary-op}
\partial C(i_0,\ldots, i_k) = \sum_{i_{k+1}} \pm C(i_0, \ldots, i_{k+1}),
\end{equation}
where summation is over $i_{k+1}$ such that $(i_0,\ldots, i_{k+1})\in N$ and the signs are chosen depending on the orientation. In what follows we consider $C(i_0, \ldots, i_k)$ as an antisymmetric expression with respect to permutation of indices, where sign is given by changing the orientation of the chain. In particular, with proper choice of orientations of the chains the sings $\pm$ in (\ref{boundary-op}) will be $+$; these orientations may be considered as pullbacks of certain orientations of the simplices of $\Delta'$ under the coloring map $\chi$. Of course, we may use mod $2$ homology without worrying about signs at all.

Assume that the volume of every $C_i$ is at most $\alpha n^{-m}$ (remember that we normalize by $\|Q\|=1$). We want to bound $\alpha$ from below independently on $n$. We need the lemma:

\begin{lem}
\label{bounded-vol}
For every fixed $i$ the total volume of the $k$-codimensional skeleton
$\|C_i^{(d-k)}\|$ is at most $g(d,k) \|C_i\| n^k \le g(d, k) \alpha n^{k-m}$.
\end{lem}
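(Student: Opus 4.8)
The plan is to reduce the estimate to an elementary double count over the small cubes that make up $C_i$, first for the standard cubic partition and then to absorb the effect of the perturbation into the constant.

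First I would work with the unperturbed cubic partition. Here $C_i$ is a union of $N_i=\|C_i\|\,n^d$ small cubes of side $1/n$, each of $d$-volume $n^{-d}$. A single such small cube has exactly $\binom{d}{k}2^k$ faces of dimension $d-k$ — one for each of the $\binom{d}{k}$ choices of the $k$ coordinate directions to freeze together with the $2^k$ choices of extreme values for them — and each of these faces has $(d-k)$-volume $n^{-(d-k)}$. Since the skeleton $C_i^{(d-k)}=\bigcup_{(i,i_1,\dots,i_k)\in N} C(i,i_1,\dots,i_k)$ is contained in the union of those $(d-k)$-dimensional faces of the cubic grid that lie in $\overline{C_i}$, summing the $(d-k)$-volumes of all codimension-$k$ faces of all small cubes of $C_i$ (which only overcounts, as faces are shared) gives
$$
\|C_i^{(d-k)}\| \le N_i\cdot \binom{d}{k}2^k\cdot n^{-(d-k)} = \binom{d}{k}2^k\,\|C_i\|\,n^k,
$$
and combining with the standing assumption $\|C_i\|\le\alpha n^{-m}$ yields the claimed bound with $g(d,k)=\binom{d}{k}2^k$ in the cubic case.

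For the actual simple partition I would invoke the fact, already noted in the three descriptions of the perturbation above, that all face volumes (of every dimension) change by an arbitrarily small amount: each cell of the perturbed partition is within arbitrarily small Hausdorff distance of a small cube, is of uniformly bounded combinatorial complexity, and the cells cut out by shifting are still $O(n^d)$ in number with $(d-k)$-skeleton volume $O(n^{-(d-k)})$ each. Hence the count above goes through verbatim up to an arbitrarily small error, so one may take $g(d,k)=\binom{d}{k}2^k+1$ (say) for all sufficiently fine perturbations. I do not expect a genuine obstacle here — the estimate is pure double counting; the only point needing care is the bookkeeping for the perturbation so that the additive error stays absorbed into $g(d,k)$, and the observation that a coarser estimate suffices since the constant $f(d,m)$ is spoiled by the general-position step anyway.
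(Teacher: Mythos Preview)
Your argument is essentially the paper's: double-count the $(d-k)$-faces over the small cubes of $C_i$ to get $\binom{d}{k}2^k\|C_i\|n^k$, then absorb the general-position perturbation into the constant. The only difference is in the bookkeeping for the perturbation: the paper bounds the \emph{splitting multiplicity} of each codimension-$k$ face under the shifting perturbation by $2^{k-1}$, yielding the explicit $g(d,k)=\binom{d}{k}2^{2k-1}$ that feeds into the asymptotic formula for $f(d,m,n)$, whereas your ``volumes change by an arbitrarily small amount, so $g(d,k)=\binom{d}{k}2^k+1$ works'' is a little loose---what is actually controlled is the number of faces per cell (your bounded-complexity remark), not an additive perturbation of volumes uniform over all $C_i$---but this does not affect the correctness of the lemma, only the sharpness of the constant.
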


\begin{proof}
We have started with the cubic sets $C_i$, and for every small cube of volume $n^{-d}$ the total volume of its $k$-codimensional facets is equal to $\binom{d}{k}2^k n^{d-k}$. Every such $k$-codimensional face may split into several faces after perturbing the cubic partition; but this splitting multiplicity can also be estimated. For example, if we use the perturbation by shifting it may split into $2^{k-1}$ parts at most. In this case, by denoting $g(d, k) = \binom{d}{k}2^{2k-1}$ we obtain the required inequality. In other cases an analogous inequality holds with another $g(d,k)$.
\end{proof}

Now we do what is called in~\cite{grom2010} ``contraction in the space of cycles''. We actually use an elementary version of contraction (without using the Dold--Thom--Almgren theorem) similar to the reasoning in~\cite{kar2011}. 

For any $(d-k)$-dimensional chain $C(i_0,\ldots, i_k)$ we define the $(d-k+1)$-dimensional chain $F(i_0,\ldots, i_k)$ so that the following condition holds: 
\begin{equation}
\label{contraction-eq}
\partial F(i_0,\ldots, i_k) = C(i_0,\ldots, i_k) - \sum_{i_{k+1}} F(i_0, \ldots, i_{k+1}).
\end{equation}

In what follows we assume that if $(i_0,\ldots, i_k)$ is not a simplex of $N$ then $C(i_0,\ldots, i_k) = F(i_0,\ldots, i_k) = 0$. We will also build $F(i_0, \ldots, i_k)$ so that it is antisymmetric. Start with
$$
F(i_0, \ldots, i_m) = H\left(C(i_0, \ldots, i_m)\right),
$$
where $H$ is from Lemma~\ref{filling}. Condition (\ref{contraction-eq}) holds obviously. Then proceed by descending induction on $k$. For any boundary formula 
$$
\partial C(i_0,\ldots, i_k) = \sum_{i_{k+1}} C(i_0, \ldots, i_{k+1})
$$
with $0 < k < d$ put
$$
F(i_0,\ldots, i_k) = H \left( C(i_0,\ldots, i_k) - \sum_{i_{k+1}} F(i_0, \ldots, i_{k+1}) \right).
$$
The operator $H$ from Lemma~\ref{filling} can be applied because
\begin{multline*}
\partial \left( C(i_0,\ldots, i_k) - \sum_{i_{k+1}} F(i_0, \ldots, i_{k+1}) \right) =\\= \sum_{i_{k+1}} C(i_0, \ldots, i_{k+1}) - \sum_{i_{k+1}} C(i_0, \ldots, i_{k+1}) + \sum_{i_{k+1}, i_{k+2}} F(i_0,\ldots, i_{k+1}, i_{k+2}) = 0,
\end{multline*}
where we use condition (\ref{contraction-eq}) for $k+1$ and antisymmetry of the last sum with respect to exchanging $i_{k+1}$ and $i_{k+2}$. Condition (\ref{contraction-eq}) holds for $F(i_0,\ldots, i_k)$ by its definition and the property of $H$. The antisymmetry of $F$ is preserved if we assume $H$ to have the property $H(-z) = -H(z)$, which is obvious from the construction.

Applying Lemmas~\ref{bounded-vol} and \ref{filling} we obtain for any fixed $i_0$:
\begin{multline*}
S(i_0, k) = \sum_{i_1,\ldots, i_k} \| F(i_0, i_1, \ldots, i_k) \| \le \\
\le (k+1) \left(
\sum_{i_1,\ldots, i_k} \|C(i_0, i_1, \ldots, i_k)\| + \sum_{i_1,\ldots, i_{k+1}} \|F(i_0, i_1, \ldots, i_k, i_{k+1})\|  \right) \le \\ 
\le (k+1)! g(d, k) \alpha n^{k-m} + (k+1) S(i_0, k+1) \le (k+1)! g(d, k) \alpha + (k+1) S(i_0, k+1),
\end{multline*}
because in the first sum every face of $C_{i_0}^{(d-k)}$ appears at most $k!$ times. If we use the shifted cubic partition and Lemma~\ref{filling-rect} in place of Lemma~\ref{filling} even better inequality is obtained: 
$$
S(i_0, k) \le (k+1)! g(d, k) \alpha n^{k-m} + S(i_0, k+1).
$$

Starting from $S(i_0, m+1) = 0$ we obtain by descending induction on $k$ that $S(i_0,k)$ is bounded from above by a value $\alpha h(d, k)$ not depending on $n$. Moreover, if we still want to investigate the dependence on $n$ then for $n\to \infty$ we have (using the shifted cubic partition):
\begin{equation}
\label{h-d1n}
h(d, 1, n) \le (m+1)! \binom{d}{m} 2^{2m-1} + O(1/n).
\end{equation}

To finish the proof we consider two equal (because of double counting of pairs $(i,j)$ and $(j, i)$ and the antisymmetry) sums:
$$
X = \sum_i \left( C_i - \sum_{j} F(i, j)\right) = \sum_i C_i = Q.
$$
In the first sum every summand $X_i = C_i + \sum_{j} F(i, j)$ has volume at most $\|C_i\| + h(d,1)\alpha \le \bar f(d)\alpha$ with some large value $\bar f(d)$. In the formula
$$
\partial X_i = \partial (C_i - \sum_{j} F(i, j)) = \partial C_i - \sum_{j} C_{i, j} + \sum_{j,k} F(i, j, k) 
$$
we use antisymmetry in the indices $j$ and $k$ in the last summand to obtain:
$$
\partial X_i = \partial C_i - \sum_{j} C_{i, j}  = 0.
$$

If $\bar f(d)\alpha < 1$ (thus $f(d, m)$ in the statement of the theorem may be chosen to be $1/\bar f(d)$) then $\|X_i\|<1$ and $X_i$ must be a $d$-dimensional boundary, because every $d$-cycle representing a nontrivial homology has volume at least $1$. Hence the sum of $X_i$'s cannot give $Q$, which is a nontrivial generator of $H_d(Q, \partial Q)$.

From (\ref{h-d1n}) we obtain the following explicit estimate with dependence on $n$:
\begin{equation}
\label{asymptotic-f}
f(d,m,n) = \left( (m+1)! \binom{d}{m} 2^{2m-1} + O(1/n) \right)^{-1}.
\end{equation}


\begin{thebibliography}{99}

\bibitem{kb2010}
A.~Belov-Kanel, I.~Ivanov-Pogodaev, A.~Malistov, M.~Kharitonov. Colorings and clusters. // 22nd summer conference International mathematical Tournament of towns, Teberda, Karachai-Cherkess, 02.08.2010--10.08.2010, \href{http://olympiads.mccme.ru/lktg/2010/2/2-1en.pdf}{olympiads.mccme.ru/lktg/2010/2/2-1en.pdf}, 2010.

\bibitem{gale1979}
D.~Gale. The game of hex and the Brouwer fixed-point theorem. // Amer. Math. Monthly 86 (1979), 818--827.

\bibitem{grom2003}
M.~Gromov. Isoperimetry of waists and concentration of maps. // Geometric and Functional Analysis, 13, 2003, 178--215.

\bibitem{grom2010}
M.~Gromov. Singularities, expanders, and topology of maps. Part 2: from combinatorics to topology via algebraic isoperimetry. // Geometric and Functional Analysis, 20:2 (2010), 416--526.

\bibitem{kar2010cpt}
R.N.~Karasev. A topological central point theorem. // \href{http://arxiv.org/abs/1011.1802}{arXiv:1011.1802}, 2010.

\bibitem{kar2011}
R.N.~Karasev. A simpler proof of the Boros--F\"uredi--B\'ar\'any--Pach--Gromov theorem. // Discrete and Computational Geometry, (2011), DOI: 10.1007/s00454-011-9332-1.

\bibitem{matd2011}
M.~Matdinov. Size of components of a cube coloring. // \href{http://arxiv.org/abs/1111.3911}{arXiv:1111.3911}, 2011.

\bibitem{matpri2008}
J.~Matou\v{s}ek and A.~P\v{r}\'{\i}v\u{e}tiv\'{y}. Large monochromatic components in two-colored grids. // SIAM J. Discrete Math. 22:1 (2008), 295--311.

\bibitem{mem2009}
Y.~Memarian. On Gromov's waist of the sphere theorem. // \href{http://arxiv.org/abs/0911.3972}{arXiv:0911.3972}, 2009.

\bibitem{sit1958}
K.~Sitnikov. \"Uber die Rundheit der Kugel. // Nachr. Akad. Wiss. G\"ottingen Math.-Phys. Kl. IIa, (1958), 213--215.

\end{thebibliography}
\end{document}